\newtheorem{theorem}{Theorem}[section]
\theoremstyle{definition}
\newtheorem{definition}[theorem]{Definition}
\newtheorem{example}[theorem]{Example}
\theoremstyle{remark}
\begin{document}
\title{Equationally noetherian algebras and chain conditions}
\author{\sc M. Shahryari}
\thanks{}
\address{ Department of Pure Mathematics,  Faculty of Mathematical
Sciences, University of Tabriz, Tabriz, Iran }

\email{mshahryari@tabrizu.ac.ir}
\date{\today}

%%% ----------------------------------------------------------------------
\begin{abstract}
In this article, we describe the relation between the properties of being equational noetherian and ascending chain condition on ideals of an arbitrary algebra. We also give a formulation of Hilbert's basis theorem for varieties of algebras and obtain a criterion to investigate it for a given variety.
\end{abstract}

\maketitle

{\bf AMS Subject Classification} Primary 03C99, Secondary 08A99 and 14A99.\\
{\bf Key Words} algebraic structures; equations; algebraic set; radical ideal; coordinate algebra; Zariski topology; 
noetherain algebra; equationally noetherian algebra; pre-variety;  variety; free product; max-n group; Hilbert's basis theorem.
%1111111111111111111111111111111111111111111111111111111111111111111111111
%%%%%%%%%%%%%%%%%%%%%%%%%%%%%%%%%%%%%%%%%%%%%%%%%%%%%%%%%%%%%%%%%%%%%%

\vspace{2cm}

\section{Introduction}
Universal algebraic geometry is a new area of modern algebra, whose
subject is basically the study of equations over an arbitrary
algebraic structure $A$. In the classical algebraic geometry $A$ is
a field.  Many articles already published about algebraic geometry
over groups, see \cite{BMR1}, \cite{BMR2}, \cite{BMRom}, \cite{Fed},
\cite{KM}, and \cite{MR}. In an outstanding series of papers, Z.
Sela, developed algebraic geometry over free groups to give
affirmative answers for old problems of Alfred Tarski concerning
universal theory of free groups (see \cite{SEL}). Also in \cite{KMTarski},
two problems of Tarski about elementary theory of free groups are solved. Algebraic
geometry over algebraic structures is also developed for algebras
other than groups, for example there are results about algebraic
geometry over Lie algebras and monoids, see \cite{DR1}, \cite{MSH},
and \cite{SHEV}. Systematic study of universal algebraic geometry is
done in a series of articles by V. Remeslennikov, A. Myasnikov and
E. Daniyarova in \cite{DMR1}, \cite{DMR2}, \cite{DMR3}, and
\cite{DMR4}, (during this article, we cite to these papers as
DMR-series).

In this article, after a fast review of basic concepts of universal algebraic geometry, we describe the relation between the properties of being equational noetherian and ascending chain condition on ideals of an arbitrary algebra. We also give a formulation of Hilbert's basis theorem for varieties of algebras and obtain a criterion to investigate it for a given variety. For any algebraic language $\mathcal{L}$ and a fixed algebra $A$ of type $\mathcal{L}$, we define the concept of noetherian $A$-algebra and we prove that
the term algebra $T_{\mathcal{L}(A)}(X)$  is noetherian if and only
if every $A$-algebra is $A$-equationally noetherian, provided that
$A$ has a trivial subalgebra. Here $\mathcal{L}(A)$ denotes the
language obtained from $\mathcal{L}$ by adding new constant symbols
$a\in A$. We will prove this theorem in a more general setting; we
define algebraic geometry over a pre-variety of algebras and prove
that the free algebra of a pre-variety $A$-algebras is noetherian if
and only if every element of that pre-variety
 is $A$-equationally noetherian.

The reader who needs some backgrounds of universal algebra, should see books \cite{BS}, \cite{Gor}, or \cite{Mal2}. Our notations
here almost the same as in DMR-series. Many results of this work can be stated for structures over any first order language, but for the
sake of simplicity, we restrict ourself for the case of algebraic languages.

\section{Algebraic sets and coordinate algebras}
The notations in this article are taken from DMR-series. We need to review some definitions. From now on, $\mathcal{L}$ is an arbitrary algebraic
language and $A$ is a fixed algebra of type $\mathcal{L}$. The extended language will be denoted by $\mathcal{L}(A)$ and it is obtained
from $\mathcal{L}$ by adding new constant symbols $a\in A$. An algebra $B$ of type $\mathcal{L}(A)$ is called {\em $A$-algebra}, if the map $a\mapsto a^B$ is an embedding of $A$ in $B$. Note that here, $a^B$ denotes the interpretation of the constant symbol $a$ in $B$. We assume that $X=\{ x_1, \ldots, x_n\}$ is a finite set of variables. We denote the term algebra in the language $\mathcal{L}(A)$ and variables from $X$ by $T_{\mathcal{L}(A)}(X)$. An {\em equation with coefficients from $A$} is formula of the form $p(x_1, \ldots, x_n)\approx q(x_1, \ldots, x_n)$, where
$$
p(x_1, \ldots, x_n), q(x_1, \ldots, x_n)\in T_{\mathcal{L}(A)}(X).
$$
For the sake of convenience, we will denote such an equation by
$p\approx q$. The set of all such equations will be denoted by
$At_{\mathcal{L}(A)}(X)$. Any subset $S\subseteq
At_{\mathcal{L}(A)}(X)$ is called a {\em system of equations with
coefficients from $A$}. A system $S$ is called {\em consistent}, if
there exists an $A$-algebra $B$ and an element $(b_1, \ldots,
b_n)\in B^n$ such that for all equations $(p\approx q)\in S$, the
equality
$$
p^B(b_1, \ldots, b_n)=q^B(b_1, \ldots, b_n)
$$
holds. Note that, $p^B$ and $q^B$ are the corresponding term functions on $B^n$. A system of equations $S$ is called an ideal,
if it is a congruent set on the term algebra, i.e. the set
$$
\Theta_S=\{ (p, q): p\approx q\in S\}
$$
is a congruence on $T_{\mathcal{L}(A)}(X)$. Some times we denote this congruence by the same symbol $S$. For an arbitrary system of
 equations $S$, the ideal generated by $S$, is the smallest congruent set containing $S$ and it is denoted by $[S]$.

Suppose $B$ is an $A$-algebra. An element $(b_1, \ldots, b_n)\in B^n$  will be denoted by $\overline{b}$, some times. Let  $S$ be
 a system of equations with coefficients from $A$. Then the set
$$
V_B(S)=\{ \overline{b}\in B^n: \forall (p\approx q)\in S,\ p^B(\overline{b})=q^B(\overline{b})\}
$$
is called an {\em algebraic set}. It is clear that for any non-empty family $\{ S_i\}_{i\in I}$, we have
$$
V_B(\bigcup_{i\in I}S_i)=\bigcap_{i\in I}V_B(S_i).
$$
So, we define a closed set in $B^n$ to be empty set or any finite union of algebraic sets. Therefore, we obtain a topology on $B^n$,
which is called {\em Zariski topology}. For a subset $Y\in B^n$, its closure with respect to Zariski topology is denoted by $\overline{Y}$. For any set $Y$, we define
$$
Rad_B(Y)=\{ p\approx q: \forall\ \overline{b}\in Y,\ p^B(\overline{b})=q^B(\overline{b})\}.
$$
It is easy to see that $Rad_B(Y)$ is an ideal. Any ideal of this type is called a {\em radical ideal}.  The {\em coordinate algebra}
of $Y$ is  the quotient algebra
$$
\Gamma(Y)=\frac{T_{\mathcal{L}(A)}(X)}{Rad_B(Y)}.
$$
An arbitrary element of $\Gamma(Y)$ will be denoted by $[p]_Y$.

Suppose $Y\subseteq B^n$ and $p$ is a term. Define a function $p^Y:Y\to B$ by the rule
$$
p^Y(\overline{b})=p^B(b_1, \ldots,b_n).
$$
This is called a {\em term function} on $Y$. The set of all such functions will be denoted by $T(Y)$ and it is naturally an $A$-algebra.
It is easy to see that the map $[p]_Y\mapsto p^Y$ is a well-defined $A$-isomorphism. So, we have $\Gamma(Y)\cong T(Y)$.

\section{Equational Noetherian Algebras and Chain Conditions}

An algebra $B$ is said to be {\em equationally noetherian}, if for
any system of equations $S\subseteq At_{\mathcal{L}}(X)$, there
exists a finite sub-system $S_0$, such that $V_B(S)=V_B(S_0)$. More
generally, an $A$-algebra $B$ is called {\em $A$-equationally
noetherian}, if for any system of equations $S\subseteq
At_{\mathcal{L}(A)}(X)$, there exists a finite subset $S_0\subseteq
S$, such that $V_B(S)=V_B(S_0)$. It is easy to show that we can
choose $S_0$ from $[S]$, rather than $S$. It is proved that (see
\cite{DMR2}) an $A$-algebra $B$ is $A$-equationally noetherian, if
and only if every descending  chain of closed subsets in $B^n$ has
finite length, for any $n$.  Remember that a system of equations
said to be an ideal if
$$
\Theta_S=\{ (p,q): p\approx q\in S\}
$$
is a congruence on $T_{\mathcal{L}(A)}(X)$. We say that $S$ is
$A$-ideal if for any distinct $a_1, a_2\in A$, we never have
$a_1\approx a_2\in S$. Similarly, a congruence $R$ on an $A$-algebra
$B$, is called $A$-congruence, if $a_1Ra_2$, with $a_1, a_2\in A$,
implies $a_1=a_2$. An $A$-algebra is called {\em noetherian}, if it
satisfies the ascending chain condition on $A$-congruences. Note
that this implies that every $A$-congruence is finitely generated
and vise versa.

Suppose $A$ contains a trivial subalgebra. Then we claim that the
term algebra $T_{\mathcal{L}(A)}(X)$ is noetherian, if and only if
every $A$-algebra is $A$-equationally noetherian. We will prove this
assertion in a more general form, for any pre-variety of
$A$-algebras. Hence, we need to define the notions of universal
algebraic geometry with respect to a given pre-variety.

In the sequel, we assume that $A$ is an algebra containing a trivial
subalgebra. Suppose $\mathfrak{X}$ is a pre-variety of $A$-algebras.
As before, let $X$ be a finite set of variables. Suppose
$R_{\mathfrak{X}}$ is the smallest $A$-congruence with the property
$T_{\mathcal{L}(A)}(X)/R_{\mathfrak{X}}\in \mathfrak{X}$. It can be
easily shown that
$$
R_\mathfrak{X}=\{ (p\approx q)\in At_{\mathcal{L}(A)}(X):
\mathfrak{X}\models \forall x_1\ldots \forall x_n (p\approx q)\}.
$$
Let
$$
F_{\mathfrak{X}}(X)=\frac{T_{\mathcal{L}(A)}(X)}{R_{\mathfrak{X}}}.
$$
It is well-known that $F_{\mathfrak{X}}(X)$ belongs to
$\mathfrak{X}$ and it is freely generated by the set $X$. We denote
an arbitrary element of $F_{\mathfrak{X}}(X)$ by $\overline{p}$,
where $p$ is a term in $\mathcal{L}(A)$. Note that if $\mathcal{L}$
is the language of groups and $\mathfrak{X}$ is the variety of all
groups, then $F_{\mathfrak{X}}(X)=F(X)$, the free group with the
basis $X$. If $A$ is a group and $\mathfrak{X}$ is the class of all
$A$-groups, then $F_{\mathfrak{X}}(X)=A\ast F(X)$, the free product
of $A$ and the free group $F(X)$.

Suppose now, $B\in \mathfrak{X}$ and $\overline{b}\in B^n$. We know that there exists a homomorphism $\varphi:F_{\mathfrak{X}}(X)\to B$ such that
$$
\varphi(\overline{p})=p^B(b_1, \ldots, b_n).
$$
Therefore, if $\overline{p}_1=\overline{p}_2$, then $p_1^B(b_1, \ldots, b_n)=p_2^B(b_1, \ldots, b_n)$. This shows that the following definition
has no ambiguity.

\begin{definition}
An $\mathfrak{X}$-equation is an expression of the form $\overline{p}\approx \overline{q}$, where $p$ and $q$ are terms in the language
$\mathcal{L}(A)$. If $B$ is an $A$-algebra and $\overline{b}$ is an element of $B^n$, we say that $\overline{b}$ is a solution of
$\overline{p}\approx \overline{q}$, if $p^B(b_1, \ldots, b_n)=q^B(b_1, \ldots, b_n)$.
\end{definition}

Let $S$ be a system of $\mathfrak{X}$-equations. The set of all solutions of elements of $S$, will be denoted by $V_B^{\mathfrak{X}}(S)$.
The following observation shows that this is an ordinary algebraic set. Let $S^{\prime}$ be the set of all equations $p\approx q$ such that $\overline{p}\approx \overline{q}$ belongs to $S$. Then it can be easily verified that
$$
V_B^{\mathfrak{X}}(S)=V_B(S^{\prime}).
$$
Therefore, in the sequel we will denote the algebraic set $V_B^{\mathfrak{X}}(S)$ by the same notation $V_B(S)$. The Zariski topology arising from
algebraic sets relative to the pre-variety $\mathfrak{X}$ is the same as the ordinary Zariski topology.  If $Y\subseteq B^n$, we define
$$
Rad_B^{\mathfrak{X}}(Y)=\{ \overline{p}\approx \overline{q}: \forall \overline{b}\in Y\ p^B(b_1, \ldots, b_n)=q^B(b_1, \ldots, b_n)\}.
$$
The quotient algebra
$$
\Gamma_{\mathfrak{X}}(Y)=\frac{F_{\mathfrak{X}}(X)}{Rad_B^{\mathfrak{X}}(Y)}
$$
is the {\em $\mathfrak{X}$-coordinate algebra} of $Y$. Again, it is easy to see that $\Gamma_{\mathfrak{X}}(Y)\cong \Gamma(Y)$.
We are now, ready to prove our main theorem.

\begin{theorem}
Let $\mathfrak{Y}$ be a variety of algebras of type $\mathcal{L}$ and $A\in \mathfrak{Y}$ containing a trivial subalgebra. Let
$\mathfrak{X}=\mathfrak{Y}_A$ be the class of elements of $\mathfrak{Y}$ which are $A$-algebra. Then the free algebra
$F_{\mathfrak{X}}(X)$ is noetherian if and only if every $B\in \mathfrak{X}$ is $A$-equationally noetherian.
\end{theorem}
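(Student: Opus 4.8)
The plan is to prove the two implications by passing freely between three objects: systems of $\mathfrak{X}$-equations in the variables $X$, $A$-congruences of $F:=F_{\mathfrak{X}}(X)$, and evaluation homomorphisms from $F$ into members of $\mathfrak{X}$. The one fact I would isolate at the start is this: for $B\in\mathfrak{X}$ and $\overline{b}\in B^n$ the evaluation map $F\to B$ with $x_j\mapsto b_j$ is a homomorphism (since $F$ is free in $\mathfrak{X}$) whose kernel is an $A$-congruence (since $B$ is an $A$-algebra); hence $V_B(S)=V_B(\langle S\rangle)$ for every system $S$, where $\langle S\rangle$ denotes the $A$-congruence of $F$ generated by $S$.

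For the direction ``$F$ noetherian $\Rightarrow$ every $B\in\mathfrak{X}$ is $A$-equationally noetherian'' I would run a Hilbert-basis argument. Given $B\in\mathfrak{X}$ and a system $S\subseteq At_{\mathcal{L}(A)}(X)$, write $S=\bigcup_{m}S_m$ as an increasing union of finite subsystems and let $R_m$ be the smallest $A$-congruence of $F$ containing $S_m$; if no such $A$-congruence exists then the congruence generated by $S_m$ already identifies two distinct constants, so $S_m$ (and hence $S$) is unsolvable in every member of $\mathfrak{X}$ and $V_B(S)=V_B(S_m)=\varnothing$. Otherwise the ascending chain $R_1\subseteq R_2\subseteq\cdots$ stabilises at some $R_N$, which is exactly the smallest $A$-congruence containing $S$, and then $V_B(S)=V_B(R_N)=V_B(\langle S_N\rangle)=V_B(S_N)$ with $S_N$ finite. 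The same argument works with $n$ replaced by any finite number of variables, which is why the statement is to be read with $X$ ranging over all finite variable sets.

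For the converse I would argue by contraposition: assuming $F$ is not noetherian, I must produce a member of $\mathfrak{X}$ that is not $A$-equationally noetherian. Fix a strictly increasing chain $R_1\subsetneq R_2\subsetneq\cdots$ of $A$-congruences of $F$ and set $C_i:=F/R_i$; because $\mathfrak{Y}$ is a variety each $C_i$ lies in $\mathfrak{X}$, and therefore so does $B:=\prod_{i\ge 1}C_i$ (with $A$ embedded diagonally). Since the equations are coordinatewise, $V_B(R_k)=\prod_i V_{C_i}(R_k)$, so the aim is to realise the chain as a strictly decreasing chain of algebraic sets $V_B(R_1)\supsetneq V_B(R_2)\supsetneq\cdots$ in $B^n$; by the criterion recalled from the DMR-series this immediately shows $B$ is not $A$-equationally noetherian. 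Concretely, for each $k$ one wants a point $\overline{b}_k\in B^n$ solving $R_k$ but not $R_{k+1}$: in the coordinates $i\ge k$ take the canonical generating tuple of $C_i$ (it solves $R_k$ as $R_k\subseteq R_i$, and the $i=k$ coordinate does not solve $R_{k+1}$ since $R_{k+1}\not\subseteq R_k$), while in the coordinates $i<k$ one still has to insert tuples of $C_i^n$ solving $R_k$ — and a single empty such set would collapse the whole chain.

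This last point is where I expect the real difficulty to lie, and where the hypothesis that $A$ has a trivial subalgebra is used. It supplies the augmentation $A$-homomorphism $\varepsilon\colon F\to A$ sending every $x_j$ to the element $e$ of the trivial subalgebra; since $A$ is a subalgebra of each $C_i$, once the bad chain is taken inside $\ker\varepsilon$ one may fill each coordinate $i<k$ with the image of the constant tuple $(e,\dots,e)$, which then solves $R_k$, and the construction of $\overline b_k$ goes through. The remaining step — the main obstacle — is therefore to check that non-noetherianity of $F$ can always be witnessed by a strictly increasing chain of $A$-congruences contained in $\ker\varepsilon$, equivalently by quotients of $F$ that all retract onto $A$ (possibly after enlarging $X$, or by replacing $B$ with a suitably larger product into which all the $C_i$ embed). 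Everything else is bookkeeping with the dictionary between systems of $\mathfrak{X}$-equations, $A$-congruences of $F_{\mathfrak{X}}(X)$, and evaluation homomorphisms.
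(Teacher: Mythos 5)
Your forward direction is essentially the paper's proof: pass to the congruence of $F_{\mathfrak{X}}(X)$ generated by the system, split into the case where it identifies two distinct constants of $A$ (so $V_B(S)=\emptyset$ is already cut out by a finite subsystem) and the case where it is an $A$-congruence, and then use the ascending chain condition in the form ``every $A$-congruence is finitely generated''. Your preliminary fact $V_B(S)=V_B(\langle S\rangle)$, proved via kernels of evaluation homomorphisms, is exactly the mechanism the paper uses when it replaces $S$ by $S^{\ast}$. (One cosmetic point: for uncountable $S$ the finite subsystems form a directed family rather than an $\omega$-chain, so you should extract a maximal element of the directed family $\{R_m\}$ rather than speak of stabilisation of an increasing sequence; this is harmless.) This half is complete.

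The converse is also the paper's argument --- quotients $C_i=F/R_i$, the product $B=\prod_i C_i$, and generic points producing a strictly descending chain of algebraic sets --- but you have, correctly, put your finger on the one step that carries all the weight, and you have not closed it. To get $V_B(R_k)\supsetneq V_B(R_{k+1})$ you need a point of $B^n$ whose $i$-th coordinate lies in $V_{C_i}(R_k)$ for every $i$, and for $i<k$ the nonemptiness of $V_{C_i}(R_k)$ is equivalent to the existence of an $A$-homomorphism $C_k\to C_i$, which need not exist in general (for groups with $A=\mathbb{Z}/2=\langle a\rangle$, an $A$-algebra isomorphic to $\mathbb{Z}/4$ with $a$ the square of the generator admits no $A$-homomorphism to an $A$-algebra in which $a$ has no square root). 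The paper disposes of this with the single assertion that, since $A$ has a trivial subalgebra, ``we can assume $B_i\le B$''; unwinding that assertion leads to exactly the condition you isolate, since the natural candidate for the missing coordinates, the tuple $(e,\dots,e)$, lies in $V_{C_i}(R_k)$ precisely when $R_k\subseteq\ker\varepsilon$ for the augmentation $\varepsilon\colon F\to A$ (the constant map to $e$ is an $\mathcal{L}$-homomorphism but not an $\mathcal{L}(A)$-homomorphism, so it does not give an $A$-embedding $C_i\hookrightarrow B$ for free). Your proposal explicitly leaves open the reduction of an arbitrary strictly increasing chain of $A$-congruences to one contained in $\ker\varepsilon$ (or some other device guaranteeing $V_{C_i}(R_k)\ne\emptyset$), so as written the converse direction has a genuine gap --- located at precisely the point where the paper's own proof is at its tersest. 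To complete the argument you must either prove that reduction or justify the claimed $A$-embeddings $C_i\hookrightarrow B$ directly.
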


\begin{proof}
Suppose first that $F_{\mathfrak{X}}(X)$ is noetherian and $B\in \mathfrak{X}$. Let $S$ be a system of $\mathfrak{X}$-equations and
$$
\Theta_S=\{ (\overline{p}, \overline{q}):\ \overline{p}\approx \overline{q}\in S\}.
$$
Let $R$ be the congruence, generated by $\Theta_S$. If $R$ is not an $A$-congruence, then distinct elements $a_1$ and $a_2$ in $A$ do exist such
that $(a_1, a_2)=(\overline{a}_1, \overline{a}_2)\in R$. Let
$$
S^{\ast}=\{ \overline{p}\approx \overline{q}:\ (\overline{p}, \overline{q})\in R\}.
$$
It is easy to see that
$$
V_B(S)=V_B(S^{\ast})=\emptyset,
$$
so, we can assume that $S_0=\{ a_1\approx a_2\}\subseteq S^{\ast}$, and hence $V_B(S)=V_B(S_0)$. Therefore, we assume that $R$ is an $A$-congruence.
By assumption, $R$ is finitely generated, so there exists a finite subset $R_0\subseteq R$, such that $[R_0]=R$. Let $S_0$ be the set of
equations corresponding to $R_0$. Then $S_0\subseteq S^{\ast}$ and we have $S_0^{\ast}=S^{\ast}$. Hence
$$
V_B(S)=V_B(S^{\ast})=V_B(S_0^{\ast})=V_B(S_0).
$$
This proves that $B$ is $A$-equationally noetherian.

Conversely, suppose every $B\in \mathfrak{X}$ is $A$-equationally noetherian. Let $R$ be an $A$-congruence on $F_{\mathfrak{X}}(X)$. The algebra
$$
B_R=\frac{F_{\mathfrak{X}}(X)}{R}
$$ belongs to $\mathfrak{Y}$ and it is an $A$-algebra. So, $B_R\in \mathfrak{X}$. We denote an arbitrary element of $B_R$ by $\overline{t}/R$.
If $(\overline{p}, \overline{q})\in R$, then
\begin{eqnarray*}
p^{B_R}(\overline{x}_1/R, \ldots, \overline{x}_n/R)&=&\overline{p(x_1, \ldots, x_n)}/R\\
                                                   &=&\overline{q(x_1, \ldots, x_n)}/R\\
                                                   &=&q^{B_R}(\overline{x}_1/R, \ldots, \overline{x}_n/R).
\end{eqnarray*}
Hence, if we let $S_R$ to be the set of $\mathfrak{X}$-equations corresponding to $R$, then the generic point is a solution of $S_R$, i.e.
$$
(\overline{x}_1/R, \ldots, \overline{x}_n/R)\in V_{B_R}(S_R).
$$
Suppose $\overline{p}\approx \overline{q}$ is an arbitrary $\mathfrak{X}$-equation and
$$
(\overline{x}_1/R, \ldots, \overline{x}_n/R)\in V_{B_R}(\overline{p}\approx \overline{q}).
$$
Then, $(\overline{p}, \overline{q})\in R$. Keeping in mind this observation, now assume that
$$
R_1\subset R_2\subset \cdots
$$
is a proper chain of $A$-congruences in $F_{\mathfrak{X}}(X)$. For any $i$, let
$$
S_i=\{ \overline{p}\approx \overline{q}:\ (\overline{p}, \overline{q})\in R_i\}.
$$
Suppose $(\overline{p}_i, \overline{q}_i)\in R_{i+1}\setminus R_i$ and $L_i$ is the congruence generated by $R_i$ and $(\overline{p}_i,\overline{q}_i)$.
Finally, let $T_i$ be the set of $\mathfrak{X}$-equations corresponding to $L_i$. Then we have $S_i\varsubsetneq T_i\subseteq S_{i+1}$.
Let $B=\prod_i B_i$, where $B_i=F_{\mathfrak{X}}(X)/R_i$. Then clearly, $B\in \mathfrak{X}$ and so, it is $A$-equationally noetherian.
Since $A$ contains a trivial subalgebra, so we can assume that $B_i\leq B$, for all $i$. Now, by the above observation,
$$
(\overline{x}_1/R_i, \ldots, \overline{x}_n/R_i)\in V_{B_i}(S_i)\subseteq V_B(S_i),
$$
but it is not an element of $V_B(T_i)$ and consequently, it does not belong to $V_B(S_{i+1})$. This shows that the following chain of algebraic
sets is proper
$$
V_B(S_1)\supset V_B(S_2)\supset \cdots,
$$
which is a contradiction.
\end{proof}

Now, we are able to give an exact formulation of Hilbert's basis theorem. Suppose $\mathcal{L}$ is an algebraic language and $\mathfrak{Y}$ is a
variety of algebras of type $\mathcal{L}$. Let $A\in \mathfrak{Y}$ and $\mathfrak{X}=\mathfrak{Y}_A$ be the class of all elements of $\mathfrak{Y}$ which are $A$-algebra. If $A$ has maximal property on its ideals, is the algebra $F_{\mathfrak{X}}(X)$ noetherian?

\begin{example}
Let $\mathcal{L}=(0, 1, +, \times)$ be the language of unital rings and $\mathfrak{Y}$ be the variety of all commutative rings with unite element.
Let $A\in \mathfrak{Y}$ and  $\mathfrak{X}=\mathfrak{Y}_A$. If $X=\{ x_1, \ldots, x_n\}$, then $F_{\mathfrak{X}}(X)=A[x_1, \ldots, x_n]$ and hence
Hilbert's basis theorem is valid in this case.
\end{example}

\begin{example}
Let $\mathcal{L}=(e, ^{-1}, \cdot)$ be the language of groups. Let $\mathfrak{Y}$ be the variety of groups. Let $A$ be any group and
$\mathfrak{X}=\mathfrak{Y}_A$. Then $F_{\mathfrak{X}}(X)=A\ast F(X)$. We show that $F_{\mathfrak{X}}(X)$ is not noetherian even if $A$ has
maximal property on its normal subgroups (max-n). Consider the Baumslag-Solitar group
$$
B_{m, n}=\langle a, t: ta^mt^{-1}=a^n\rangle,
$$
where $m, n\geq 1$ and $m\neq n$. Then, as is proved in \cite{BMR1},
this group is not equationally noetherian. Let $B=A\ast B_{m,n}$.
Then $B$ is an $A$-group which is not $A$-equationally noetherian.
So, by the above theorem $A\ast F(X)$ is not noetherian, Hilbert's
basis theorem fails.
\end{example}

\begin{example}
Let $\mathfrak{Y}$ be the variety of abelian groups and $A\in \mathfrak{Y}$ be finitely generated. Suppose $\mathfrak{X}=\mathfrak{Y}_A$.
Then it is easy to see that $F_{\mathfrak{X}}(X)=A\times F_{ab}(X)$, where $F_{ab}(X)$ is the free abelian group generated by $X$. So,
$F_{\mathfrak{X}}(X)=A\times \mathbb{Z}^n$. As a $\mathbb{Z}$-module, clearly $A\times \mathbb{Z}^n$ is noetherian, so Hilbert's basis theorem is true for any finitely generated abelian group $A$ in the variety of abelian groups. As a result, every abelian group $B$ containing $A$ is $A$-equationally noetherian.
\end{example}

As we mentioned above, if $A\leq B$ and $B$ is not equationally noetherian, then it is also not $A$-equationally noetherian. So, let
$\mathfrak{Y}$ be a variety of algebras and $A\in \mathfrak{Y}$. Let $\mathfrak{X}=\mathfrak{Y}_A$. If there exists an element
$B\in \mathfrak{Y}$ which is not equationally noetherian, then by our theorem, $F_{\mathfrak{X}}(X)$ is not noetherian, so we never have a version of Hilbert's basis theorem for the variety $\mathfrak{Y}$.

\begin{example}
Let $\mathfrak{Y}$ be the variety of nilpotent groups of class at most $c$. If $A\in \mathfrak{Y}$ and $\mathfrak{X}=\mathfrak{Y}_A$ and
$B\in \mathfrak{Y}$ is not finitely generated, then by \cite{MR}, $B$ is not equationally noetherian and hence $F_{\mathfrak{X}}(X)$ is not noetherian.
\end{example}

\end{document}